\begin{document}
\title{Linear Fractional Self-Maps of the Unit Ball in $\mathbb{C}^N$.}
 
\author{Michael R. Pilla\\ Ball State University\\michael.pilla@bsu.edu\\{\it AMS Subject Classification}: 32A10, 32A40, 47B50}

\maketitle

\bibliographystyle{amsplain}

\numberwithin{equation}{section}
\theoremstyle{plain}
\newtheorem{Proposition}[equation]{Proposition}
\newtheorem{Corollary}[equation]{Corollary}
\newtheorem*{Corollary*}{Corollary}
\newtheorem{Theorem}[equation]{Theorem}
\newtheorem*{Theorem*}{Theorem}
\newtheorem{Lemma}[equation]{Lemma}
\theoremstyle{definition}
\newtheorem{Definition}[equation]{Definition}
\newtheorem{Conjecture}[equation]{Conjecture}
\newtheorem{Example}[equation]{Example}
\newtheorem{Exercise}[equation]{Exercise}
\newtheorem{Remark}[equation]{Remark}
\newtheorem{Question}[equation]{Question}

\setlength{\parskip}{12pt}
\setlength{\parindent}{0in}

\newcommand{\R}{\mathbb{R}}
\newcommand{\C}{\mathbb{C}}
\newcommand{\D}{\mathbb{D}}

\begin{abstract}
Determining the range of complex maps plays a fundamental role in the study of several complex variables and operator theory. In particular, one is often interested in determining when a given holomorphic function is a self-map of the unit ball. In this paper, we discuss a class of maps in $\mathbb{C}^N$ that generalize linear fractional maps. We then proceed to determine precisely when such a map is a self-map of the unit ball. In particular, we take a novel approach obtaining numerous new results about this class of maps along the way.
\end{abstract}

\section{Introduction.}

Our goal is to completely characterize which linear fractional transformations map the ball $\mathbb{B}^N$ into itself for $N>1$. We recall the unit ball is defined as
$$\mathbb{B}^N=\{z \in \mathbb{C}^N \mid |z|<1\}.$$
A characterization for $N=1$ can be determined by noting that linear fractional maps (LFMs) in the disk map the boundary of the disk to a dilated and translated automorphism of the disk. This determination makes critical use of the well-known fact that LFMs map generalized circles to generalized circles, where a generalized circle is defined to include lines. This unique feature of LFMs will prevent our results in this section from generalizing to rational maps of degree greater than $1$. 

In order to replicate this line of reasoning in higher dimensions, we first produce some preliminary results about LFMs.

\section{LFMs in $\mathbb{C}^N$}
Recall that a linear fractional map in $\mathbb{C}$ is defined as 

$$\phi(z)=\frac{az+b}{cz+d}.$$

where the coefficients $a$, $b$, $c$, and $d$ are complex numbers such that $ad-bc \neq 0$ (otherwise $\phi(z)$ is a constant function). 

Given such a map $\phi$, we will find it insightful to make use of its associated matrix, defined as 

\begin{equation*}
    m_{\phi} = \begin{pmatrix}
        a & b \\
        c & d
     \end{pmatrix}.
\end{equation*}

For $c \neq0$, we must have $z \neq -\frac{d}{c}$, from which it follows, in order to avoid poles in the disk, that the inequality 

$$|d|^2-|c|^2>0$$

must hold. While this is a necessary condition for a LFM to be a self-map of the disk, it is not sufficient. It turns out a stronger condition holds.

\begin{Theorem}
The LFM $\phi(z)=\frac{az+b}{cz+d}$ is a self-map of the disk if and only if the following inequality holds:
$$ |b \overline{d}-a\overline{c}|+|ad-bc|\leq |d|^2-|c|^2.$$
\end{Theorem}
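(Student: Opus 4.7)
The plan is to determine the image $\phi(\mathbb{D})$ explicitly as a Euclidean disk in $\mathbb{C}$, and then recast the self-map condition as the statement that this image disk is contained in the closed unit disk, i.e., that its center's modulus plus its radius is at most $1$.

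First I would dispose of the case $c=0$: here $\phi(z)=(a/d)z+b/d$ is an affine map and the claimed inequality reduces to $|d|(|a|+|b|)\le|d|^2$, which clearly characterizes affine self-maps of $\mathbb{D}$. For $c\neq 0$, the pole $-d/c$ of $\phi$ must lie outside $\overline{\mathbb{D}}$, so the necessary inequality $|d|^2-|c|^2>0$ noted in the paper is available, and the formal inverse $\phi^{-1}(w)=(b-dw)/(cw-a)$ is a well-defined LFM on a neighborhood of $\phi(\overline{\mathbb{D}})$. Since LFMs send generalized circles to generalized circles and $\phi$ has no pole in $\overline{\mathbb{D}}$, the image $\phi(\mathbb{D})$ is a bona fide Euclidean disk bounded by $\phi(\partial\mathbb{D})$.

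Next I would read off that disk by parameterizing $\phi(\partial\mathbb{D})$ via $|\phi^{-1}(w)|=1$. Setting $|b-dw|^2=|cw-a|^2$ and expanding yields
\begin{equation*}
(|d|^2-|c|^2)\,|w|^2 \;-\; 2\,\mathrm{Re}\bigl((d\bar b-c\bar a)w\bigr) \;+\; |b|^2-|a|^2 \;=\; 0.
\end{equation*}
Dividing by $A:=|d|^2-|c|^2>0$ and completing the square identifies the center $w_0=(b\bar d-a\bar c)/A$ and a squared radius $r^2=(|d\bar b-c\bar a|^2-A(|b|^2-|a|^2))/A^2$. The crucial algebraic identity is
\begin{equation*}
|d\bar b-c\bar a|^2 - (|d|^2-|c|^2)(|b|^2-|a|^2) \;=\; |ad-bc|^2,
\end{equation*}
which collapses $r$ to the clean form $r=|ad-bc|/A$; expanding both sides and cancelling the common $|a|^2|c|^2+|b|^2|d|^2$ terms reduces this to the identity $|ad|^2+|bc|^2-2\,\mathrm{Re}(ad\,\overline{bc})=|ad-bc|^2$. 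This computation is the one step requiring real care, and it is the main obstacle.

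Finally, to confirm that $\phi(\mathbb{D})$ is the \emph{interior} disk bounded by this circle (rather than the exterior region in the Riemann sphere), I would check that $\phi(0)=b/d$ lies in $\overline{D(w_0,r)}$ by a short calculation showing $|\phi(0)-w_0|=(|c|/|d|)\,r\le r$. Once this is established, $\phi$ is a self-map of $\mathbb{D}$ if and only if $\overline{D(w_0,r)}\subseteq\overline{\mathbb{D}}$, equivalently $|w_0|+r\le 1$; multiplying through by $A$ produces exactly the inequality in the theorem.
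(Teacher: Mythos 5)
Your argument is correct: the computation of the image circle from $|b-dw|=|cw-a|$, the center $w_0=(b\bar d-a\bar c)/(|d|^2-|c|^2)$, the identity $|d\bar b-c\bar a|^2-(|d|^2-|c|^2)(|b|^2-|a|^2)=|ad-bc|^2$ giving $r=|ad-bc|/(|d|^2-|c|^2)$, and the check that $\phi(0)$ lies inside (so the image is the bounded component) all go through, and $|w_0|+r\le 1$ is exactly the stated inequality; the $c=0$ case also reduces correctly. The paper itself does not reprove this one-variable theorem (it defers to Mart\'in), but your route --- image of the disk is a Euclidean disk, containment in $\mathbb{D}$ iff $|\text{center}|+\text{radius}\le 1$ --- is precisely the folklore argument the introduction sketches and the same strategy the paper later mirrors in $\mathbb{B}^N$ (translated ellipsoid, center $M$, rows of $RU$), so this is essentially the intended proof, with the computations filled in.
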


It appears the most accessible location of the proof, which is relatively simple, can be found in Mart\'in's paper \cite{Martin}. Another location of the proof can be found in \cite{Lipa}. Other than this, it seem to have primarily resided in the mathematical folklore.

We aim to obtain a parallel result in $\mathbb{B}^N$ for $N>1$. In order to discuss linear fractional self-maps in higher dimensions, we must first state what it means to be a linear fractional map in $\mathbb{C}^N$. One remarkable fact about LFMs in $\mathbb{C}$ is that they are exactly the linear transformations in homogeneous coordinates. Taking the perspective that LFMs in $\mathbb{C}^N$ should share this property, we arrive at the following definition. For more details, see \cite{Pilla}.

\begin{Definition}
We say $\phi$ is a linear fractional map in $\mathbb{C}^N$ if 

\begin{equation}
\label{LFM}
\phi(z)=\frac{Az+B}{\langle z,C\rangle+D}.
\end{equation}

\noindent where $A$ is an $N \times N$ matrix, $B$ and $C$ are column vectors in $\mathbb{C}^N$, $D \in \mathbb{C}$, and $\langle \cdot, \cdot \rangle$ is the standard inner product.
\end{Definition}

As in the case of the disk, the domain of $\phi$ is given by $\{z \in \mathbb{C}^N \mid \langle z, C \rangle+D \neq 0\}$. In order to avoid poles in the unit ball, we then require $|D|^2>|C|^2$ since $z=-\frac{DC}{|C|^2}$ is a zero of $\langle z, C \rangle +D$ which for our case requires  $\left|-\frac{DC}{|C|^2}\right|>1$.

We define the associated matrix $m_{\phi}$ of the linear fractional map $\phi(z)=\frac{Az+B}{\langle z,C\rangle+D}$ to be given by

\begin{equation*}
    m_{\phi} = \begin{pmatrix}
        A & B \\
        C^* & D
     \end{pmatrix}
\end{equation*}

Note that for a LFM in $\mathbb{C}^N$, $m_{\phi}$ will be an $(n+1) \times (n+1)$ matrix. For LFMs $\phi$ and $\psi$, a routine calculation shows that $m_{\phi \circ \psi}=m_{\phi}m_{\psi}$ and  $m_{\phi^{-1}}=(m_{\phi})^{-1}$. Thus, function composition corresponds to matrix multiplication and $m_{\phi}$ is invertible as a matrix if and only if the LFM $\phi$ has an inverse. In order to ensure this, we will presume $\phi$ is one-to-one.

In order to generalize the results of Mart\'in, we ask if our LFMs map generalized spheres to generalized spheres in $\mathbb{C}^N$ for $N>1$. It turns out this is just a little too much to ask. We do, however, have the following.

\begin{Theorem}
Linear fractional maps in $\mathbb{C}^N$ map generalized ellipsoids to generalized ellipsoids.
\end{Theorem}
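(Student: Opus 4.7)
The plan is to pass to homogeneous coordinates and reduce the theorem to a statement about Hermitian forms. Identify each $z \in \mathbb{C}^{N}$ with the homogeneous vector $\xi = (z,1)^{T} \in \mathbb{C}^{N+1}$, and define a \emph{generalized ellipsoid} to be a set of the form $\{z \in \mathbb{C}^{N} : \xi^{*} H \xi = 0\}$ where $H$ is a nonzero Hermitian $(N+1)\times(N+1)$ matrix of appropriate signature (at most one negative eigenvalue, allowing degenerate limits). Writing
$$H = \begin{pmatrix} M & v \\ v^{*} & c \end{pmatrix}, \qquad \xi^{*} H \xi = \langle Mz,z\rangle + \langle z,v\rangle + \langle v,z\rangle + c,$$
one sees, after completing the square when $M$ is positive definite, that this is the equation of a real ellipsoid in $\mathbb{C}^{N}\cong\mathbb{R}^{2N}$; the other admissible signatures recover the degenerate cases (affine hyperplanes, paraboloids) that must be included for the class to be preserved.

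The key step is to exploit the fact already noted in the paper that $m_{\phi}$ realizes $\phi$ \emph{linearly} in homogeneous coordinates. A direct block computation gives
$$m_{\phi}\xi = \begin{pmatrix} Az+B \\ \langle z,C\rangle + D \end{pmatrix} = (\langle z,C\rangle + D)\,\eta, \qquad \eta := (\phi(z),1)^{T},$$
so on the domain of $\phi$ the vectors $\eta$ and $m_{\phi}\xi$ differ only by a nonzero scalar. Consequently, for any Hermitian $H'$, the quantity $\eta^{*} H' \eta$ agrees with $|\langle z,C\rangle+D|^{-2}\,\xi^{*}(m_{\phi}^{*} H' m_{\phi})\xi$ up to a positive factor, so $\eta$ satisfies the Hermitian equation determined by $H'$ precisely when $\xi$ satisfies the equation determined by $m_{\phi}^{*} H' m_{\phi}$.

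Taking $H' = (m_{\phi}^{-1})^{*} H\, m_{\phi}^{-1}$, which is again Hermitian and has the same signature as $H$ by Sylvester's law of inertia, one concludes that $\phi$ carries the generalized ellipsoid cut out by $H$ bijectively onto the generalized ellipsoid cut out by $H'$. The main obstacle, as I see it, is pinning down the correct definition of ``generalized ellipsoid.'' Taking the class too narrowly (e.g.\ bounded ellipsoids with positive definite $M$) destroys preservation, since pushing an ellipsoid through a pole of $\phi$ typically produces an unbounded Hermitian quadric; the Hermitian-form definition above is the natural one, since it is automatically closed under the congruence $H \mapsto (m_{\phi}^{-1})^{*} H m_{\phi}^{-1}$, after which the theorem reduces to a one-line matrix identity together with Sylvester's law.
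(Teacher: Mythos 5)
Your argument is correct, but it is a genuinely different route from the paper's. You work in homogeneous coordinates: a generalized ellipsoid is encoded as the zero set of a Hermitian form $\xi^*H\xi$, the identity $m_{\phi}\xi=(\langle z,C\rangle+D)\,\eta$ shows $\phi$ acts by the congruence $H\mapsto (m_{\phi}^{-1})^*Hm_{\phi}^{-1}$, and Sylvester's law of inertia controls the signature — so preservation of the class is essentially a one-line matrix computation, close in spirit to the Cowen--MacCluer viewpoint the paper cites. The paper instead first proves a decomposition theorem (via Bruhat decomposition of $m_{\phi}$) writing every LFM as a composition of multi-linear maps and reflections, and then verifies by direct expansion in real coordinates ($z_k=x_k+iy_k$) that the defining quadratic equation of an ellipsoid is carried to another quadratic equation ("nonstandard form") under each atomic map. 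Your approach buys brevity, a coordinate-free bookkeeping of exactly which class is invariant (Hermitian quadrics, including the unbounded and degenerate images that arise when the ellipsoid meets the pole hyperplane $\langle z,C\rangle+D=0$ — a point the paper's proof passes over by simply declaring the image equation a generalized ellipsoid in nonstandard form), and an explicit invariant (the signature of $H$); the paper's approach buys an elementary, computation-only proof that parallels the one-variable folklore argument and, as a byproduct, the decomposition into reflections and multi-linear maps that the paper uses elsewhere. One small mismatch worth noting: the paper's generalized ellipsoid is the image of the solid ball $\mathbb{B}^N$ under an invertible linear map and translation, whereas you work with the boundary hypersurface; this is harmless, since the positive factor $|\langle z,C\rangle+D|^{-2}$ in your computation preserves the inequality $\xi^*H\xi<0$ as well as the equality, so the solid region is handled by the same congruence.
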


For LFMs defined on the closed unit ball, this results was proved by Cowen and MacCluer \cite{Cowen}. For this result, a generalized ellipsoid is defined as the image of $\mathbb{B}^N$ under an invertible linear tranformation composed with a translation.

We aim to discuss these objects in a more tractable way, demonstrating a novel proof of the ellipsoid-preserving properties of LFMs, more reminiscent of the proof in one complex variable. In order to do so, however, we first investigate how to decompose LFMs into their atomic parts.

\section{Decomposition of Linear Fractional Maps}

Our goal in this section will be to decompose linear fractional maps into a canon of simpler maps. In one variable, one obtains a rather nice decomposition through some clever rewriting. The simplest way to generalize this result is to appeal to decomposition of the associated matrix. Given an invertible matrix $m_{\phi}$ associated with a LFM $\phi$, Bruhat decompositon tells us we may write $m_{\phi}$ as a (matrix) product of the following: permutation matrices, upper triangular unipotent matrices, and diagonal matrices. Here, unipotent means each diagonal entry is $1$. See, for example, \cite{humphreys} for more details.

 Permutation matrices may be further decomposed into a product of matrices that permute two of the variables and leave the others fixed. The associated LFMs play the analogue of inversion maps in one variable. In hindsight, perhaps this is what ought to be expected. If one takes the perspective that, in the complex plane, inversion is simply reflection about the boundary of the disk, then inversion analogues in higher dimensions manifesting themselves as reflections permuting two variables is not so surprising. This motivates the following definition.

\begin{Definition}
A linear fractional map from $\mathbb{C}^N$ to $\mathbb{C}^N$ will be called a \textbf{reflection} if its associated matrix is a permutation matrix that permutes precisely two variables and fixes the rest.
\end{Definition}

These inversion analogues also draw attention to some of the stark differences in higher dimensions. In $\mathbb{C}$, the map $\frac{1}{z}$ possesses several useful properties, including the fact that it inverts the unit disk while mapping its boundary onto itself. The author shouldn't need to do much convincing to remind the reader of the countless results in complex analysis that utilize this fact. Thus, it is worth understanding the nature of its analogues in several variables. 

This also draws attention to the fact that we are making a conscious decision to first investigate self-maps of the unit ball. The study of one complex variable bifurcates into the study of $\mathbb{C}$ and $\mathbb{D}$ since all other simply-connected open subsets of $\mathbb{C}$ are biholomorphically equivalent to $\mathbb{D}$ by the Riemann Mapping Theorem. It's well-known that this theorem fails spectacularly even in $\mathbb{C}^2$. In fact, even the unit polydisc, defined by 
$$D^N=\{z \in \mathbb{C}^N \mid |z_j|<1 \hspace{3mm} \forall \hspace{3mm} \{j\}_1^N\}$$
is not biholomorphically equivalent to the unit ball $\mathbb{B}^N$ for $N>1$. With this in mind, it is interesting to note that our reflections do map the boundary of the polydisc into itself. One may also conclude by our decomposition that inversion of $\mathbb{B}^N$ by a linear fractional map is unique to the case $N=1$.

To handle the upper triangular matrices, we make the following definition.

\begin{Definition}
A linear fractional map from $\mathbb{C}^N$ to $\mathbb{C}^N$ will be called a \textbf{multi-linear map} if it can be written in the form $\left(\langle z, \alpha_1 \rangle +c_1,  \langle z, \alpha_2 \rangle +c_2\right)$ for some vectors $\alpha_1, \alpha_2$ and constants $c_1, c_2$.
\end{Definition}

The below theorem then quickly follows from the Bruhat decomposition of the associated matrix of the LFM. 

\begin{Theorem}
Any LFM in $\mathbb{C}^N$ can be written as a composition of multi-linear maps and reflections.
\end{Theorem}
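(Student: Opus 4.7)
The plan is to transport the Bruhat decomposition of $m_\phi$ directly through the correspondence $m_{\phi \circ \psi} = m_\phi m_\psi$. Since $\phi$ is assumed one-to-one, $m_\phi$ lies in $GL_{N+1}(\mathbb{C})$, and the Bruhat decomposition (already invoked in the preceding paragraph) factors $m_\phi$ as a product in which each factor is a permutation matrix, an upper triangular unipotent matrix, or an invertible diagonal matrix. It therefore suffices to show that each such factor is itself the associated matrix of either a reflection or a multi-linear map; the composition rule will then reassemble $\phi$ out of these atomic pieces.

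First I would handle the permutation factors by decomposing any element of the symmetric group on $N+1$ letters as a product of transpositions, since transpositions generate the symmetric group. The matrix of each transposition swaps exactly two of the $N+1$ homogeneous coordinates and fixes the remaining $N-1$, which is precisely the associated matrix of a reflection in the sense of the definition above. Next, for any upper triangular unipotent or invertible diagonal factor, I would observe that it has the block form $\bigl(\begin{smallmatrix} A & B \\ 0 & d \end{smallmatrix}\bigr)$ with $d \neq 0$ (here $d = 1$ in the unipotent case and $B = 0$ in the diagonal case). Because the associated matrix of an LFM is only defined up to a nonzero scalar, rescaling by $d^{-1}$ yields the associated matrix of the multi-linear map $z \mapsto (d^{-1}A)z + d^{-1}B$, and so each such factor contributes a multi-linear map to the composition.

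Combining these two observations and iterating the composition rule $m_{\phi_1 \circ \cdots \circ \phi_k} = m_{\phi_1} \cdots m_{\phi_k}$ expresses $\phi$ as a composition of reflections and multi-linear maps, completing the argument. Because the Bruhat decomposition is invoked as a black box, I do not expect a genuine obstacle; the only step warranting a moment's attention is the scalar normalization $d \mapsto 1$, which is legitimate precisely because both the numerator $Az + B$ and the denominator $\langle z, C \rangle + D$ of an LFM absorb a common nonzero scaling without changing $\phi$.
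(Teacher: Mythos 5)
Your proposal is correct and takes essentially the same route as the paper, which likewise derives the theorem directly from the Bruhat decomposition of $m_\phi$ via the correspondence $m_{\phi\circ\psi}=m_\phi m_\psi$, splitting permutation factors into transpositions (the reflections) and reading the upper triangular unipotent and diagonal factors as affine, hence multi-linear, maps. The paper leaves these details implicit (stating the result "quickly follows" from the Bruhat decomposition); your write-up simply makes explicit the transposition step and the harmless scalar normalization of the bottom-right entry.
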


\section{Generalized Ellipsoids}

Now that we have decomposed our LFMs, we may proceed to our desired result. It suffices to show our results for reflections and multi-linear maps. 

\begin{Theorem}
Let $\phi$ be a linear fractional map in $\mathbb{B}^N$. Then $\phi$ maps generalized ellipsoids to generalized ellipsoids.
\end{Theorem}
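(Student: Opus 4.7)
The plan is to leverage the previous decomposition theorem: any LFM in $\mathbb{C}^N$ factors as a composition of multi-linear maps and reflections, and the class of generalized ellipsoids is manifestly closed under invertible affine coordinate changes. It therefore suffices to verify the ellipsoid-preserving property separately on these two atomic classes. A multi-linear map has the form $\phi(z) = Tz + c$ with $T$ an $N \times N$ matrix whose rows record the vectors $\alpha_j$; injectivity of $\phi$ forces $T$ to be invertible. A generalized ellipsoid $E = L(\mathbb{B}^N) + b$ is then sent to $\phi(E) = (TL)(\mathbb{B}^N) + (Tb + c)$, which is again of the required form, so the multi-linear case is immediate.

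For reflections, the associated $(N+1) \times (N+1)$ matrix is the transposition of two indices $i < j$. If $j \leq N$, the induced map is simply a swap of two affine coordinates — a linear isometry — and the claim reduces to the previous paragraph. The crucial case is $j = N+1$, where $\phi$ becomes the inversion-type map sending $z_i \mapsto 1/z_i$ and $z_k \mapsto z_k/z_i$ for $k \neq i$. My approach here is to pass to homogeneous coordinates $u = (z, 1)^T \in \mathbb{C}^{N+1}$ and encode a generalized ellipsoid $\{z : (z-c)^* M (z-c) < 1\}$ as a sublevel set $\{u : u^* \widetilde{M} u < 0\}$ for an explicit Hermitian matrix $\widetilde{M} \in \mathbb{C}^{(N+1) \times (N+1)}$. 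A Schur-complement computation shows that $M$ being positive-definite forces $\widetilde{M}$ to have signature $(N, 1)$. The reflection then acts on $\widetilde{M}$ by conjugation with a permutation matrix — i.e., by a congruence — which preserves the signature.

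The main obstacle is the final dehomogenization step. After the permutation congruence we still have a signature-$(N, 1)$ Hermitian form on $\mathbb{C}^{N+1}$, but to conclude that the image in affine coordinates is a bona fide \emph{bounded} generalized ellipsoid (rather than an unbounded quadric or the complement of one) we must show that the upper-left $N \times N$ block of the permuted matrix is positive-definite and its $(N+1, N+1)$ entry is negative. I expect this to follow from the hypothesis that the input ellipsoid avoids the pole hyperplane $\{z_i = 0\}$ of $\phi$: this geometric condition translates, via the sign of the relevant diagonal entry of $\widetilde{M}$ before and after the swap, precisely into the block-positivity we need. Nailing down this sign bookkeeping — and the minor degeneracies when the ellipsoid meets the hyperplane at infinity — is the key technical step of the proof.
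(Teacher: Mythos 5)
Your overall strategy --- reduce to the two atomic classes coming from the decomposition into multi-linear maps and reflections --- is the same as the paper's, but the execution is genuinely different: the paper substitutes the maps directly into a real quadric defining equation and observes that the result is again ``an equation of a generalized ellipsoid in nonstandard form,'' whereas you handle the affine case straight from the definition (image of the ball under an invertible linear map plus translation composed with another such map), which is cleaner than the paper's expansion, and you handle the inversion-type reflection by homogenizing the Hermitian description $\{z : (z-c)^*M(z-c)<1\}$ to a form $\widetilde{M}$ of signature $(N,1)$ and using invariance of signature under congruence by the permutation matrix. That route is sound, and it has the virtue of engaging with definiteness and boundedness, which the paper's verification does not address at all; indeed, in your framework the decomposition is not even needed, since congruence of $\widetilde{M}$ by $m_{\phi}^{-1}$ for an arbitrary invertible associated matrix preserves the signature just as well as a permutation matrix does.

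The genuine gap is exactly the step you defer. You never establish that the upper-left $N \times N$ block of the permuted matrix is positive definite, and the mechanism you gesture at --- ``the sign of the relevant diagonal entry before and after the swap'' --- cannot suffice, since positivity of a single diagonal entry does not give positive definiteness of the block. What is actually needed is this: the block in question is the restriction of the signature-$(N,1)$ form $\widetilde{M}$ to the hyperplane $\{u_i=0\}$ (the homogenized pole hyperplane), and such a restriction is positive definite if and only if that projective hyperplane misses the closed cone $\{[u] : u^*\widetilde{M}u \le 0\}$; because the closed ellipsoid is compact in the affine chart and so contributes no points at infinity, this is equivalent to the closed ellipsoid being disjoint from the pole set $\{z_i=0\}$. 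That hypothesis is not free: the statement is simply false for ellipsoids whose closure meets the pole set (the image is unbounded), so some such assumption must be made explicit, and --- more seriously for the decomposition route --- you must track that each intermediate image avoids the pole set of the next factor in the composition, which the decomposition theorem by itself does not guarantee. Either supply that bookkeeping, or run your congruence argument once with $m_{\phi}$ itself under the single hypothesis that the closed ellipsoid avoids $\{\langle z, C\rangle + D = 0\}$; as written, the decisive positivity claim is an expectation rather than a proof.
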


\begin{proof}
We first show it is true for reflections. It suffices to show that the map $$\left(\frac{z_1}{z_N}, \frac{z_2}{z_N},\dots, \frac{z_{N-1}}{z_N}, \frac{1}{z_N}\right)$$ maps generalized ellipsoids to generalized ellipsoids. Recalling that a generalized ellipsoid is defined as the image of the unit ball under a linear transformation and translation, without loss of generality, it is clear to see that one may write a generalized ellipsoid in standard form as
\begin{equation*}
   \sum_{i=1}^N \alpha_i |z_i|^2+\sum_{i=1}^N\beta_i \Re(z_i)+\sum_{i=1}^N \gamma_i \Im(z_i)+\delta=0.
\end{equation*}

 Noting that $\frac{1}{z_N}=\frac{\overline{z_N}}{|z_N|^2}$, we have 
 
\begin{align*}
&=\sum_{i=1}^{N-1} \alpha_i \left|\frac{z_i}{z_N}\right|^2+\alpha_N\left|\frac{1}{z_N}\right|^2+\sum_{i=1}^{N-1}\beta_i \Re\left(\frac{z_i\overline{z_N}}{|z_N|^2}\right)+\beta_N \Re\left(\frac{\overline{z_N}}{|z_N|^2}\right)\\
&+\sum_{i=1}^{N-1}\gamma_i \Im\left(\frac{z_i\overline{z_N}}{|z_N|^2}\right)+\gamma_N \Im\left(\frac{\overline{z_N}}{|z_N|^2}\right)+\delta=0.
\end{align*}

which simplifies to 

\begin{align*}
&\sum_{i=1}^{N-1} \alpha_i \left|z_i\right|^2+\alpha_N+\sum_{i=1}^{N-1}\beta_i \Re\left(z_i\overline{z_N}\right)+\beta_N \Re\left(\overline{z_N}\right)\\
&+\sum_{i=1}^{N-1}\gamma_i \Im\left(z_i\overline{z_N}\right)+\gamma_N \Im\left(\overline{z_N}\right)+\delta|z_N|^2=0.
\end{align*}

which simplifies to 

\begin{align*}
&=\sum_{i=1}^{N-1} \alpha_i \left|z_i\right|^2+\alpha_N+\sum_{i=1}^{N-1}\beta_i \Re(z_i)\Re(z_N)+\sum_{i=1}^{N-1}\beta_i \Im(z_i)\Im(z_N)\\
&+\beta_N \Re\left(z_N\right)+\sum_{i=1}^{N-1}\gamma_i \Re(z_i)\Im(z_N)-\sum_{i=1}^{N-1}\gamma_i \Im(z_i)\Re(z_N)\\
&-\gamma_N \Im\left(z_N\right)+\delta|z_N|^2=0.
\end{align*}

We next note that this is an equation of a generalized ellipsoid in nonstandard form.

Next we consider multi-linear maps. Letting $z_k=x_k+iy_k$, $\alpha_{ij}=u_{jk}+iv_{jk}$, and $\beta_{j}=s_j+it_j$, we have 

\begin{align*}
&\sum_{j=1}^N \left|\beta_i+\sum_{k=1}^N\alpha_{jk}z_k\right|^2=\sum_{j=1}^N \left|s_j+\sum_{k=1}^N \left(u_{jk}x_k-v_{jk}y_k\right)+i\left(t_j+\sum_{k=1}^N(v_{jk}x_k+u_{jk}y_k)\right)\right|^2\\
&=\sum_{j=1}^N\left[\left(s_j+\sum_{k=1}^N \left(u_{jk}x_k-v_{jk}y_k\right)\right)^2+\left(t_j+\sum_{k=1}^N(v_{jk}x_k+u_{jk}y_k)\right)^2\right]\\
\end{align*}

From which it is evident, after expansion, that we will have the form of a generalized ellipsoid in nonstandard form.

\end{proof}

\section{Automorphisms of $\mathbb{B}^N$ and $\mathbb{D}^N$.}

Recall that for $\alpha$ with $|\alpha|<1$, there is an an automorphism of the disk $\phi_{\alpha}$ given by

\begin{equation}
\phi_{\alpha}(z)=\frac{\alpha-z}{1-\overline{\alpha}z}
\end{equation}

such that $\phi_{\alpha}(0)=\alpha$ and $\phi_{\alpha}(\alpha)=0$.

In fact, all automorphism of the disk can be written as 
\begin{equation}\label{auto}
\phi_{\alpha}(z)=e^{i \theta}\frac{\alpha-z}{1-\overline{\alpha}z}
\end{equation}

for some $\alpha$ in the disk. In particular, note that up to rotation, all automorphisms of the disk are LFMs. Furthermore, for $N=1$, the polydisk and unit ball are equivalent. 

One might hope for similar results to be true for $\mathbb{B}^N$ and $\mathbb{D}^N$ when $N>1$. It turns out such hopes are realized, although the group of automorphisms are different for each.

In the polydisk $\mathbb{D}^N$, the automorphisms consist of automorphisms of the disk such as those in Equation \ref{auto}, composed with permutations of the disks. See Rudin \cite{Rudin}. Since the polydisk is already a Cartesian product of disks in the complex plane, this result is not so surprising.

Let $\langle \phantom{z} , \phantom{z} \rangle$ denote the standard inner product and for $\alpha \in \mathbb{B}^N$ let $P_{\alpha}$ denote the orthogonal projection of $\mathbb{C}^N$ onto the span of $\alpha$ and $Q_{\alpha}(z)=z-P_{\alpha}(z)$ denote the projection onto the orthogonal complement of the span of $\alpha$. Letting $P_0(z)=0$, we have

\begin{equation}P_{\alpha}(z)=\frac{\langle z, \alpha \rangle}{\langle \alpha, \alpha \rangle}\alpha , \hspace{5mm} \alpha \neq 0.
\end{equation}

Next let $s_{\alpha}=\sqrt{1-|\alpha|^2}$. We define the following map:

\begin{equation}\label{alpha}
\phi_{\alpha}(z)=\frac{\alpha-P_{\alpha}(z)-s_{\alpha}Q_{\alpha}(z)}{1-\langle z, \alpha \rangle}
\end{equation}

for $\alpha \in \mathbb{B}^N$. Note that this map is an involution and exchanges the points $0$ and $\alpha$. 

One may show that all automorphisms of $\mathbb{B}^N$ are of the form $U\phi_{\alpha}$ where $U$ is an $n \times n$ unitary matrix (see Rudin \cite{Rudin1}). As in the disk, $U$ can be seen as a rotation of the ball.

Although it would take us too far afield to discuss domains beyond the unit ball and polydisk, for completion we state the following result due to Ahn, Byun, and Park \cite{Ahn}.  It turns out, for Hartog type domains over classical Hermitian symmetric spaces, the automorphisms consists simply of the classical Lie groups.

\section{Linear Fractional Self-Maps of $\mathbb{B}^N$}

Given a LFM $\phi(z)=\frac{Az+B}{\langle z, C \rangle +D}$, the main goal of this section is to study the range of $\phi$ on the unit ball.  While we are not the first to study this, we will be taking a novel approach. 

The first results in this direction were by Cowen and MacCluer \cite{Cowen} who showed that a LFM is a self-map of the unit ball if and only if its associated matrix is a Krein contraction. For $z_1$ in $\mathbb{C}^N$ and $z_2 \neq 0$ in $\mathbb{C}$, we identify $z=(z_1, z_2)$ with $v=\frac{z_1}{z_2}$. For $v, w$ described in this way, define the Krein inner product by $[v,w]=\langle Jv, w \rangle$ where $\langle \phantom{z}, \phantom{z} \rangle$ denotes the standard inner product with 

$$J=\begin{pmatrix}
        I & 0 \\
        0 & -1
     \end{pmatrix}.$$
Thus, a map $\phi$ is a self-map of $\mathbb{B}^N$ if and only if

$$t^2[m_{\phi}v, m_{\phi}v] \leq[v,v]$$

for some $t>0$. Without a way to determine $t$, however, this result doesn't give a practical way of determining whether a specified LFM is a self-map of the unit ball. Geometric characterizations were also given by Bisi and Bracci \cite{Bisi} but such characterizations are not so practical for determining whether a given LFM is a self-map of the ball.

Cowen and MacCluer's results were improved upon by Richman \cite{Richman} who showed how to determine $t$ in terms of eigenvalues and eigenvectors of $m_{\phi}$ for certain cases. Richman's approach depends on the fixed point behavior of $\phi$. In particular, if $\phi$ has no interior fixed point, it is known that $\phi$ has a boundary fixed point. In such a case, the positive multiple $t$ can be determined uniquely. In the case that $\phi$ has an interior fixed point, the best results one can obtain is given in terms of bounds (upper and lower) on $t$.

Our goal is determine when such a LFM is a self-map of the unit ball in the spirit of the folklore results codified by Mart\'in without resorting to fixed point behavior. We aim to give an easily verifiable criterion dependent only on the coefficients of the LFM. This will extend Richman's results to the general case of an interior fixed point and give an alternative criterion of determining when a given LFM is a self-map of the ball. Our approach will lie in the realization that the image of the unit ball under a LFM is a translated ellipsoid. After determining the translation, we recognize the ellipsoid as a perturbation of the unit ball. We thus apply some basic tools from linear algebra in order to obtain our desired result.  We began with a basic lemma about perturbations.

\begin{Lemma}\label{sb}
For $\alpha=(\alpha_1,\cdots, \alpha_N) \in \mathbb{B}^N\backslash \{0\}$ and $s=\sqrt{1-|\alpha|^2}$, let 
$\Omega_N=s\beta-sI-\beta$ with $\beta=\frac{1}{|\alpha|^2}(\alpha_i\overline{\alpha_j})$ where $I$ is the $N \times N$ identity matrix and $(\alpha_i\overline{\alpha_j})$ is the $N \times N$ matrix with $ij^{\text{th}}$ entry $\alpha_i\overline{\alpha_j}$. Then we have the following:

$$\Omega_N^{-1}=-\frac{1}{s|\alpha|^2}\left[|\alpha|^2I+(s-1)(\alpha_i\overline{\alpha_j})\right].$$

\end{Lemma}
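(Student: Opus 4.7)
My plan is to exploit the rank-one structure of the matrix $(\alpha_i\overline{\alpha_j})$. Viewing $\alpha$ as a column vector, we have $(\alpha_i\overline{\alpha_j}) = \alpha\alpha^*$, so $\beta = \frac{1}{|\alpha|^2}\alpha\alpha^*$ is the orthogonal projection onto the complex line $\operatorname{span}(\alpha)$. The essential property I would use is that $\beta$ is idempotent, i.e.\ $\beta^2 = \beta$, which follows from $\alpha^*\alpha = |\alpha|^2$.

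With this identification, I would rewrite both matrices entirely in the $\{I,\beta\}$-algebra. First, observe
\begin{equation*}
\Omega_N = s\beta - sI - \beta = (s-1)\beta - sI.
\end{equation*}
Similarly, since $(\alpha_i\overline{\alpha_j}) = |\alpha|^2 \beta$, the claimed inverse becomes
\begin{equation*}
-\frac{1}{s|\alpha|^2}\bigl[|\alpha|^2 I + (s-1)|\alpha|^2\beta\bigr] = -\frac{1}{s}\bigl[I + (s-1)\beta\bigr].
\end{equation*}
Thus both expressions lie in the two-dimensional commutative subalgebra spanned by $I$ and $\beta$, which reduces the problem to a scalar verification.

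The verification itself is a one-line product: I would compute
\begin{equation*}
\bigl[(s-1)\beta - sI\bigr]\cdot\Bigl(-\tfrac{1}{s}\bigr[I + (s-1)\beta\bigl]\Bigr),
\end{equation*}
expand using $\beta^2 = \beta$, and collect the $\beta$-coefficient, which equals $(s-1) + (s-1)^2 - s(s-1) = (s-1)\bigl[1 + (s-1) - s\bigr] = 0$. The $I$-coefficient collapses to $1$, confirming the identity. Note that $s > 0$ by the hypothesis $\alpha \in \mathbb{B}^N$, and $|\alpha| \neq 0$ since $\alpha \neq 0$, so no division is problematic.

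There is no real obstacle here; the only subtlety is recognizing that the definition of $\beta$ is tailored so that $\beta^2 = \beta$, which is what makes the $\{I,\beta\}$-algebra two-dimensional and the inversion tractable. Without this observation one is tempted to invoke a general formula (Sherman--Morrison applied to $\Omega_N = -sI + (s-1)\beta$ would also work, since $\beta$ has rank one), but the idempotent approach is cleaner and makes the cancellation transparent.
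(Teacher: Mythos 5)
Your proof is correct, and it takes a genuinely different (though closely related) route from the paper's. You observe that $\beta=\alpha\alpha^*/|\alpha|^2$ is idempotent, so both $\Omega_N=(s-1)\beta-sI$ and the claimed inverse $-\tfrac{1}{s}\left[I+(s-1)\beta\right]$ live in the two-dimensional commutative algebra spanned by $I$ and $\beta$, and you simply multiply them out: the $\beta$-coefficient $(s-1)+(s-1)^2-s(s-1)$ vanishes and the $I$-coefficient is $1$, so the product is $I$; the hypotheses $\alpha\neq 0$ and $|\alpha|<1$ enter only to guarantee $|\alpha|\neq 0$ and $s>0$. The paper instead \emph{derives} the inverse: it writes $\Omega_N=-sI+(s-1)\beta$ as a rank-one perturbation of $-sI$ and applies the Sherman--Morrison formula. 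That route has the advantage of producing the formula without knowing it in advance, which is presumably how it was found; its cost is that one must exhibit $(s-1)\beta$ as an outer product (the paper uses a factor $\sqrt{s-1}/|\alpha|$, which is delicate since $0<s<1$ makes $s-1$ negative, so the square root is imaginary and the conjugate-transpose bookkeeping needs care) and then simplify the Sherman--Morrison quotient. Your verification is more elementary and self-contained, and the cancellation is transparent; its only drawback is that it presupposes the answer rather than discovering it.
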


\begin{proof}

First, we recall the Sherman-Morrison formula \cite{press} which states that for an invertible square matrix $A$ and column vectors $u,v$ such that $1+\overline{v}^TA^{-1}u \neq 0$, we have
$$\left(A+u\overline{v}^T\right)^{-1}=A^{-1}-\frac{A^{-1}u\overline{v}^TA^{-1}}{1+\overline{v}^TA^{-1}u}$$

where $u\overline{v}^T$ is the outer product of $u$ and $v$.

We realize that we may write $\Omega_N$ as the sum of an invertible matrix and outer product given by 
$$\Omega_N=-sI+(s-1)\beta=-sI+\left(\frac{\sqrt{s-1}}{|\alpha|}\right)\alpha \overline{\left[\left(\frac{\sqrt{s-1}}{|\alpha|}\right)\alpha\right]}^T$$

and we may thus apply the Sherman-Morrison formula. We obtain the following.

\begin{align*}
\Omega_N^{-1}&=\left(-sI+\left(\frac{\sqrt{s-1}}{|\alpha|}\right)\alpha \overline{\left[\left(\frac{\sqrt{s-1}}{|\alpha|}\right)\alpha\right]}^T\right)^{-1}\\
&=-\frac{1}{s}I+\frac{-\frac{(s-1)}{s^2|\alpha|^2}(\alpha_i\overline{\alpha_j})}{1-\frac{(s-1)}{s|\alpha|^2}|\alpha|^2}=-\frac{1}{s}I-\frac{(s-1)}{s|\alpha|^2}\frac{(\alpha_i\overline{\alpha_j})}{(s-(s-1))}\\
&=\frac{-|\alpha|^2I-(s-1)(\alpha_i\overline{\alpha_j})}{s|\alpha|^2}\\
&=-\frac{1}{s|\alpha|^2}\left(|\alpha|^2+(s-1)(\alpha_i\overline{\alpha_j})\right)=-\frac{1+(s-1)\beta}{s}.
\end{align*}

\end{proof}

We now have the following main result. We presume $C \neq 0$ to avoid the linear case and assume $|D|^2>|C|^2$.

\begin{Theorem}
Let $M_i$ represent the $i^{\textit{th}}$ row of the matrix $M$ and $s=\frac{\sqrt{|D|^2-|C|^2}}{|D|}$. A LFM 

\begin{equation}
\phi(z)=\frac{Az+B}{\langle z, C \rangle+D}
\end{equation}  

is a self-map of $\mathbb{B}^N$ if and only if 

\begin{align*}
&|BD-AC|^2+\left|\left(BC^*-A\overline{D}\left(s+(1-s)\frac{CC^*}{|C|^2}\right)\right)_i\right|^2\\
&-2\biggl< BD-AC, \left(BC^*-A\overline{D}\left(s+(1-s)\frac{CC^*}{|C|^2}\right)\right)_i \biggr> \leq (|D|^2-|C|^2)^2
\end{align*}

for all $1 \leq i \leq N$.

\end{Theorem}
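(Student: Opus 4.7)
The plan is to reduce the self-map question to one about an affine map by composing $\phi$ with an automorphism of $\mathbb{B}^N$. Since $|D|^2 > |C|^2$, the point $\alpha_* := -C/\overline{D}$ lies in $\mathbb{B}^N$, and the involution $\phi_{\alpha_*}$ of Equation~(\ref{alpha}) preserves $\mathbb{B}^N$ while exchanging $0$ and $\alpha_*$. Hence $\psi := \phi \circ \phi_{\alpha_*}$ has the same image on $\mathbb{B}^N$ as $\phi$. The crucial computational observation is that the Hermitian shape matrix $M_C := sI + (1-s)\frac{CC^*}{|C|^2}$ appearing in $\phi_{\alpha_*}$ satisfies $M_C C = C$, so when one forms the matrix product $m_\phi \cdot m_{\phi_{\alpha_*}}$, the lower-left block of the associated matrix of $\psi$ vanishes. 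This forces $\psi$ to be affine: $\psi(z) = c_\phi + Nz$, where $c_\phi = \phi(\alpha_*)$ is the center of the image ellipsoid $\phi(\mathbb{B}^N)$ and $N$ is an explicit shape matrix.

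A direct calculation identifies $c_\phi$ and $N$ in terms of $A$, $B$, $C$, $D$, and $s$, yielding, after scaling out the factor $|D|^2-|C|^2$, the quantities $BD-AC$ and $BC^*-\overline{D}AM_C$ that appear in the theorem. Because $\phi_{\alpha_*}$ is a bijection of $\mathbb{B}^N$, the self-map condition is equivalent to the containment of the translated ellipsoid $c_\phi + N\overline{\mathbb{B}^N}$ in $\overline{\mathbb{B}^N}$, i.e.\ $|c_\phi + Nw|^2 \leq 1$ for every $w$ with $|w|\leq 1$.

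To convert this geometric containment into the stated row-by-row inequality, I would expand $|c_\phi + Nw|^2$ as a real quadratic form in $w$ and use Lagrange multipliers to locate its maximum on $\overline{\mathbb{B}^N}$. The resulting equation for the extremizer is a rank-one perturbation of the identity, and the Sherman--Morrison inverse produced by Lemma~\ref{sb} delivers the extremizer in closed form. Substituting this back into the bound and multiplying through by $(|D|^2-|C|^2)^2$ to clear denominators then yields the claimed inequality, holding simultaneously for each $1 \leq i \leq N$.

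The main obstacle is this last extremization step: in general, the maximum of a quadratic form over the unit ball does not decouple across coordinates. The ``for all $i$'' form of the theorem's condition reflects the very specific rank-one structure of $M_C - sI$ that Lemma~\ref{sb} exploits, producing one scalar inequality per row of $N$. The rest of the work is careful algebraic bookkeeping, in particular tracking the unimodular phase from $D/\overline{D}$ that naturally appears when composing with $\phi_{\alpha_*}$ but must be absorbed in order to match the form of the statement.
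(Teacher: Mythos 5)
Your first two steps are sound and, in substance, the same reduction the paper makes, just packaged differently: where the paper writes a self-map as $\phi = RU\phi_{\alpha}+M$ and solves for $M$ and $RU$ by matching coefficients (using Lemma~\ref{sb} to invert $s\beta-sI-\beta$), you compose with the involution $\phi_{\alpha_*}$, $\alpha_*=-C/\overline{D}$, and observe via $m_{\phi}m_{\phi_{\alpha_*}}$ that the lower-left block vanishes because $M_C C=C$, so that $\phi\circ\phi_{\alpha_*}$ is affine with the same image as $\phi$. That is correct and in some ways cleaner (it gives the affinity unconditionally, so the ``if and only if'' reduction to the containment $c_\phi+N\overline{\mathbb{B}^N}\subseteq\overline{\mathbb{B}^N}$ needs no a priori self-map assumption), and it recovers the same center $\frac{B\overline{D}-AC}{|D|^2-|C|^2}$ and linear part up to the unimodular phase you flag.

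The genuine gap is the final step, which is where the content of the theorem actually lies: passing from $\sup_{|w|\leq 1}|c_\phi+Nw|\leq 1$ to the stated family of inequalities, one per row. You propose Lagrange multipliers plus the Sherman--Morrison formula of Lemma~\ref{sb} to produce the extremizer in closed form, but the stationarity equation there involves $N^*N$, and $N=\frac{BC^*-\overline{D}AM_C}{|D|^2-|C|^2}$ contains an arbitrary $A$; only $M_C$ has the rank-one-plus-identity structure, and that structure is destroyed by multiplication by $A$. So Lemma~\ref{sb} does not deliver the maximizer, and you yourself concede that the maximum of $|c_\phi+Nw|^2$ over the ball does not decouple into coordinates --- which is precisely the decoupling the theorem's ``for all $i$'' condition asserts. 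In other words, the step you label ``the main obstacle'' is not an obstacle you have overcome but the claim to be proved, and your sketch gives no argument for it. (Lemma~\ref{sb} is used by the paper at an earlier point, to solve for $M$ and $RU$ from the coefficient identification --- a step your composition trick replaces --- while the row-by-row condition is obtained there by a direct comparison, via the law of cosines, of the displacement vector $M$ with each row $RU_i$ of the linear part; your proposal supplies no substitute for that argument.)
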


\begin{proof}
If $\phi$ is a self-map of $\mathbb{B}^N$, then we saw $\mathbb{B}^N$ will map onto an ellipsoid inside of $\mathbb{B}^N$. Hence $\phi$ can be written as

\begin{equation}
\phi=RU\phi_{\alpha}+M.
\end{equation}

where $U$ is a $N \times N$ unitary matrix, $R$ is a compression matrix, $\phi_{\alpha}$ is equal to equation \ref{alpha}, and $M=(m_1,\cdots, m_N)^T$ is the center of the ellipsoid given by $\phi(\mathbb{B}^N)$. 

Next, we let $\beta=\frac{1}{|\alpha|^2}(\alpha_i \overline{\alpha_j})$ where $\alpha_i \overline{\alpha_j}$ represents the ${ij}^{\text{th}}$ entry of an $N \times N$ matrix, we note that $\phi_{\alpha}(z)=\frac{\alpha-(s+\beta-s\beta)z}{1-\alpha^*z}$. A computation shows that we must have

\begin{equation}\label{map}
\frac{Az+B}{C^*z+D}=\frac{RU(s\beta-s-\beta)z-\Gamma' z+M+RU\alpha}{1-\alpha^*z}.
\end{equation}

where $\Gamma'=(m_i\overline{\alpha_j})$.

It follows that we must have $C=-\alpha$ and $D=1$. Thus $|\alpha|^2=|C|^2$. Let $\Gamma=-\Gamma'=(m_i\overline{c_j})$. Comparing the numerators of \ref{map} implies $B=M+RU\alpha$ and $A=RU(s\beta-s-\beta)-\Gamma'$. Thus, since $C=-\alpha$ we have $M=B+\left(A-\Gamma\right)(s\beta-s-\beta)^{-1}C$ which we may rearrange as

\begin{equation}\label{M}
M+\Gamma\left(-\frac{1}{s}I-\frac{s-1}{s|C|^2}(c_i\overline{c_j})\right)C=B+A\left(-\frac{1}{s}I-\frac{s-1}{s|C|^2}(c_i\overline{c_j})\right)C.
\end{equation}

Noting that $(c_i\overline{c_j})C=|C|^2C$, the left-hand side of \ref{M} gives us

$$M-\frac{1}{s}\Gamma C-\frac{s-1}{s|C|^2}\Gamma (c_i\overline{c_j})C=M-\left(\frac{1}{s}+\frac{s-1}{s}\right)\Gamma C=M-\Gamma C=\left(1-|C|^2\right)M.$$

Likewise, after some simplification, we see that the right-hand side of \ref{M} gives $B-AC$. Thus we have

$$\left(1-|C|^2\right)M= B-AC \Rightarrow M=\frac{B-AC}{1-|C|^2}.$$

Next, since $\Gamma=MC^*=\left(\frac{B-AC}{1-|C|^2}\right)C^*$ we have that 

\begin{align*}
RU&=\left(A-\Gamma\right)(s\beta-s-\beta)^{-1}\\
&=\left(A-\Gamma\right)\left(-\frac{1}{s|C|^2}\left[|C|^2I+(s-1)(c_i\overline{c_j})\right]\right)\\
&=-\frac{1}{s}A-\frac{(s-1)}{s|C|^2}A(c_i\overline{c_j})+\frac{(s-1)}{s|C|^2}\Gamma (c_i \overline{c_j})+\frac{1}{s}\Gamma\\
&=-\frac{1}{s}A-\frac{(s-1)}{s|C|^2}ACC^*+\Gamma.\\
&=-\frac{1}{s}A-\frac{(s-1)}{s|C|^2}ACC^*+\left(\frac{B-AC}{1-|C|^2}\right)C^*\\
&=-\frac{1}{s}A+\frac{BC^*}{1-|C|^2}-\frac{s-(1-|C|^2)}{s|C|^2(1-|C|^2)}ACC^*\\
&=\frac{BC^*-A\left(s+(1-s)\frac{CC^*}{|C|^2}\right)}{1-|C|^2}.
\end{align*}

Next, we let $RU_i$ represent the $i^{\text{th}}$ row of $RU$. We apply the law of cosines to find the length of the vector connecting the displacement vector $M$ and $RU_i$ for each $i$. In order to remain in the unit ball, this set of $i$ vectors must each be less than or equal to $1$. That is, we must have 

$$|M|^2+|RU_i|^2-2\langle M, RU_i \rangle \leq 1$$

for all $1 \leq i \leq N$.

After multiplying the inequality by $1-|C|^2$ and rearranging, we obtain the set of inequalities

\begin{align*}
&|B-AC|^2+\left|\left(BC^*-A\left(s+(1-s)\frac{CC^*}{|C|^2}\right)\right)_i\right|^2\\
&-2\biggl< B-AC, \left(BC^*-A\left(s+(1-s)\frac{CC^*}{|C|^2}\right)\right)_i \biggr>\leq (1-|C|^2)^2
\end{align*}

for all $1 \leq i \leq N$.

Finally, returning from our normalized case of $D \neq 1$, we obtain our results.
\end{proof}

The linear case follows quite simply. We have $\phi(z)=Az+b$. Let $A_i$ represent the $i^{th}$ row of the matrix $A$. By the same reasoning as above we require 

$$|B|^2+|A_i|^2-2 \langle B, A_i \rangle \leq 1$$

for all $1 \leq i \leq N$.

\begin{Example}
Let $\phi$ be the linear fractional map in two complex variables given by

$$\phi(z)=\phi(z_1,z_2)=\left( \frac{z_1+1}{-z_1+3}, \frac{2z_2}{-z_1+3} \right).$$

Identifying $\langle z, C \rangle$ with $C^*z$, we can write this as 

$$\left( \frac{z_1+1}{-z_1+3}, \frac{2z_2}{-z_1+3} \right)=\frac{\begin{pmatrix}
       1 & 0  \\
       0 & 2 
     \end{pmatrix}\begin{pmatrix}
       z_1 \\
       z_2 
     \end{pmatrix}
+\begin{pmatrix}
       1 \\
       0 
     \end{pmatrix}}{(-1, 0)^T(z_1, z_2)+3}.$$

Thus we have $A=\begin{pmatrix}
       1 & 0  \\
       0 & 2 
     \end{pmatrix}$, $B=(1, 0)^T$, $C=(-1, 0)^T$, and $D=3$. Hence $BD-AC=(4,0)^T$ and
$$BC^*-A\overline{D}\left(s+(1-s)\frac{CC^*}{|C|^2}\right)=\begin{pmatrix}
       -4 & 0  \\
       0 & 4\sqrt{2} 
     \end{pmatrix}$$
Thus we have 
\begin{align*}
&|(4, 0)|^2+|(-4,0)|^2-2\langle (4,0), (-4,0) \rangle &=64=(|D|^2-|C|^2)^2\\
&|(4, 0)|^2+|(0,4\sqrt{2})|^2-2\langle (4,0), (0,4\sqrt{2}) \rangle &=46<64=(|D|^2-|C|^2)^2\\
\end{align*}

Hence $\phi$ is a self-map of the unit ball.
\end{Example}

One final comment regarding this result. As seen in the example, equality is achieved precisely when the self-map has no interior fixed points. Thus, the derived criteria for determining whether a LFM is a self-map of the unit ball also tells us whether our map has an interior fixed point or not. If there is no interior fixed point, then it is well known that our map must have a privileged fixed point on the boundary, known as the Denjoy-Wolff point \cite{maccluer}. 

\section*{Acknowledgements}

The author would like to thank Brittney Miller and Chris Felder for fruitful conversations regarding analytic self-maps of the disk. The author would also like to thank Ralph Bremigan for recommending the use of Bruhat decomposition.

\end{document}